\date{01 May 2014}
\title[Combinatorial Descent Data]{Combinatorial Descent Data for Gerbes}
\author{Amnon Yekutieli}
\address{A. Yekutieli: Department of  Mathematics
Ben Gurion University,
Be'er Sheva 84105,
Israel}
\email{amyekut@math.bgu.ac.il}
\thanks{{\em Mathematics Subject Classification} 2000.
Primary: 18G50; Secondary: 18G30, 20L05.}
\keywords{Cosimplicial crossed groupoids, descent, gerbes.}
\thanks{This research was supported by the Israel Science Foundation.}
\newtheorem{thm}[equation]{Theorem}
\newtheorem{prop}[equation]{Proposition}
\newtheorem{lem}[equation]{Lemma}
\theoremstyle{definition}
\newtheorem{dfn}[equation]{Definition}
\newtheorem{rem}[equation]{Remark}
\numberwithin{equation}{section}
\newcommand{\iso}{\xrightarrow{\simeq}}
\newcommand{\opn}{\operatorname}
\newcommand{\cat}[1]{\operatorname{\mathsf{#1}}}
\newcommand{\ol}{\overline}
\newcommand{\rmitem}[1]{\item[\text{\textup{(#1)}}]}
\newcommand{\mfrak}[1]{\mathfrak{#1}}
\newcommand{\mcal}[1]{\mathcal{#1}}
\newcommand{\mrm}[1]{\mathrm{#1}}
\newcommand{\mbb}[1]{\mathbb{#1}}
\newcommand{\tup}[1]{\textup{#1}}
\newcommand{\bsym}[1]{\boldsymbol{#1}}
\newcommand{\hatotimes}[1]{\, \what{{\otimes}}_{#1} \,}
\newcommand{\hot}{\hatotimes{}}
\newcommand{\til}[1]{\tilde{#1}}
\newcommand{\what}[1]{\widehat{#1}}
\newcommand{\K}{\mbb{K}}
\newcommand{\N}{\mbb{N}}
\newcommand{\m}{\mfrak{m}}
\newcommand{\al}{\alpha}
\newcommand{\gerbe}[1]{\bsym{\mcal{#1}}}
\newcommand{\crvar}{\curvearrowright}
\begin{document}

\begin{abstract}
We consider descent data in cosimplicial crossed groupoids. This is a
combinatorial abstraction of the descent data for gerbes in algebraic geometry. 
The main result is this: a weak equivalence between
cosimplicial crossed groupoids induces a bijection on gauge equivalence classes
of descent data. 
\end{abstract}

\maketitle

\setcounter{section}{-1}
\section{Introduction}
\label{sec:Int}

For a cosimplicial crossed groupoid
$G = \{ G^p \}_{p \in \mbb{N}}$ 
we denote by 
$\ol{\opn{Desc}}(G)$
the set of gauge equivalence classes of descent data. 
The purpose of this note is to prove: 

\begin{thm}[Equivalence] \label{thm:1} 
Let $\Phi : G \to H$ be a weak equivalence between 
cosimplicial crossed groupoids. Then the function 
\[ \ol{\opn{Desc}}(\Phi) : \ol{\opn{Desc}}(G) \to
\ol{\opn{Desc}}(H) \]
is bijective.
\end{thm}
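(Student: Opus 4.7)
The plan is to prove surjectivity and injectivity of $\ol{\opn{Desc}}(F)$ separately. A descent datum in a cosimplicial crossed groupoid is essentially a layered triple: an object at cosimplicial degree $0$, a $1$-morphism at degree $1$ relating its two face images, and a $2$-cell at degree $2$ witnessing a cocycle condition that is in turn checked in degree $3$. The lifting and comparison arguments will accordingly proceed level by level, using at each cosimplicial stage the defining properties of $F^p$ as a weak equivalence of crossed groupoids: essential surjectivity on objects, fullness on $1$-morphisms modulo $2$-cells, and bijectivity on $2$-cells with prescribed boundary.

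For surjectivity, given a descent datum $(b, h, w)$ in $\twocat{H}$, I would first invoke essential surjectivity of $F^0$ to obtain $a \in \twocat{G}^0$ together with an isomorphism $\psi : F^0(a) \iso b$, and then twist $(b,h,w)$ by $\psi$ to a gauge-equivalent datum for which $F^0(a) = b$ on the nose. Next, since the two faces $d^0 a, d^1 a \in \twocat{G}^1$ map under $F^1$ to the source and target of $h$, fullness of $F^1$ produces a lift $g$ of $h$, and any residual $2$-cell indeterminacy is absorbed into a further gauge adjustment. Finally, the bijectivity of $F^2$ on $2$-cells between fixed $1$-morphisms produces a unique $z \in \twocat{G}^2$ lifting $w$; the cocycle condition for $(a,g,z)$ in degree $3$ then follows from faithfulness of $F^3$ applied to the known cocycle condition for $(b,h,w)$.

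Injectivity is handled by the same scheme, applied not to a descent datum in $\twocat{H}$ but to a gauge equivalence in $\twocat{H}$ between $F(a,g,z)$ and $F(a',g',z')$. Such an equivalence is itself a triple living in cosimplicial degrees $0$, $1$, $2$, so the three-step lifting produces a gauge equivalence in $\twocat{G}$. The main obstacle, as in any obstruction-theoretic descent argument, is coherence between the levels: every adjustment made at cosimplicial degree $p$ (turning an isomorphism into an equality, or absorbing the indeterminacy in a lift) perturbs the data prescribed at degrees $p+1$ and $p+2$, and one must verify that the freedom afforded by the next component of the weak equivalence is exactly what is required to cancel the induced perturbation. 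Organising these compensations systematically --- and keeping track of base points carefully enough that the $\pi_1$ and $\pi_2$ statements can be quoted at the correct objects --- is where the combinatorial work of the proof will concentrate.
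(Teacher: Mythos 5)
Your plan follows essentially the same route as the paper's proof: lift level by level, using $\pi_0$-surjectivity of $F^0$, surjectivity of $F^1$ on $1$-morphisms modulo $\opn{D}$-images of $2$-cells, bijectivity of $F^2$ on $2$-cells with prescribed boundary (Lemma \ref{lem:cosim-equiv.100}), and injectivity on $\pi_2$ in the top degree to kill the cocycle obstruction, with gauge adjustments absorbing the indeterminacy created at each stage; the injectivity half is likewise handled by lifting the gauge transformation itself through degrees $0$, $1$ and checking in degree $2$. The ``combinatorial work'' you defer is precisely what the paper isolates as Lemma \ref{lem:104} --- that a partial gauge transformation out of a genuine descent datum uniquely transports the $2$-cell component and preserves both descent conditions --- which is what legitimizes each of your twisting steps.
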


The various notions involved are recalled or defined in Section 1. The theorem
is repeated as Theorem \ref{thm:100} in Section 2, and proved there. Connections
with other papers, and several remarks, are in Section 3. 
 
Theorem \ref{thm:1} plays a crucial role in the new version of our paper 
\cite{Ye1} on twisted deformation quantization of algebraic varieties. This is
explained in Remark \ref{rem:12}.

\medskip \noindent
\textbf{Acknowledgments.}
I wish to thank Matan Prezma, Ronald Brown, Sharon Hollander, Vladimir Hinich,
Behrang Noohi and Lawrence Breen for useful discussions.

\section{Combinatorial Descent Data}
\label{sec:cosim}

We begin with a quick review of cosimplicial theory. 
Let $\bsym{\Delta}$ denote the simplex category. The set
of objects of $\bsym{\Delta}$ is the set $\mbb{N}$ of natural
numbers. Given $p, q \in \mbb{N}$, the morphisms $\alpha : p \to q$
in $\bsym{\Delta}$ are order preserving functions
\[ \alpha : \{ 0, \ldots, p \} \to \{ 0, \ldots, q \} . \]
We denote this set of morphisms by
$\bsym{\Delta}^q_p$.
An element of $\bsym{\Delta}^q_p$ may be thought of as a sequence
$\bsym{i} = (i_0, \ldots, i_p)$ of integers with
$0 \leq i_0 \leq \cdots \leq i_p \leq q$. 
We call 
$\bsym{\Delta}^q := \{ \bsym{\Delta}^q_p \}_{p \in \mbb{N}}$
the $q$-dimensional combinatorial simplex, and an element
$\bsym{i} \in \bsym{\Delta}^q_p$ is a $p$-dimensional face of
$\bsym{\Delta}^q$.

Let $\cat{C}$ be some category. A {\em cosimplicial object} in 
$\cat{C}$ is a functor $C : \bsym{\Delta} \to \cat{C}$.
We shall usually write $C^p := C(p) \in \opn{Ob} (\cat{C})$, and leave
the morphisms
$C(\alpha) : C(p) \to C(q)$, for $\alpha \in \bsym{\Delta}^q_p$,
implicit. Thus we shall refer to the cosimplicial object $C$ as
$\{ C^p \}_{p \in \mbb{N}}$.
The category of cosimplicial objects in $\cat{C}$, where the morphisms 
are natural transformations of functors 
$\bsym{\Delta} \to \cat{C}$, is denoted by 
$\bsym{\Delta}(\cat{C})$.

If $\cat{C}$ is a category of sets with structure, then an object 
$C \in \opn{Ob}(\cat{C})$ has elements $c \in C$. 
Let $\{ C^p \}_{p \in \mbb{N}}$ be a cosimplicial object of $\cat{C}$. 
Given a face $\bsym{i} \in \bsym{\Delta}_p^q$
and an element $c \in C^p$, it will be convenient to write
\begin{equation} \label{eqn:101}
c_{\bsym{i}} := C(\bsym{i})(c) \in C^q .
\end{equation}
The picture to keep in mind is of ``the element $c$ pushed to the face
$\bsym{i}$ of the simplex $\bsym{\Delta}^q$''. 
See Figure \ref{fig:2} for an illustration.

Let $G$ be a groupoid. For objects $x, y \in \opn{Ob}(G)$ we
write $G(x,y) := \opn{Hom}_{G}(x, y)$, the set of morphisms
$g : x \to y$. We also denote by $G(x) := G(x, x)$ the automorphism group of
$x$. 

Suppose $G_1$ and $G_2$ are groupoids, such that 
$\opn{Ob}(G_1) = \opn{Ob}(G_2)$. 
An {\em action} $\Psi$ of $G_1$ on 
$G_2$ is a collection of group isomorphisms 
$\Psi(g) : G_2(x) \iso G_2(y)$
for all $x, y \in \opn{Ob}(G_1)$ and $g \in G_1(x, y)$, 
such that 
$\Psi(h \circ g) = \Psi(h) \circ \Psi(g)$
whenever $g$ and $h$ are composable, and 
$\Psi(1_x) = \bsym{1}_{G_2(x)}$. Here $1_x \in G_1(x)$ is the the identity
automorphism of the object $x$ in the groupoid $G_1$, and $\bsym{1}_{G_2(x)}$
is the identity automorphism of the group $G_2(x)$.
The prototypical example is the adjoint action $\opn{Ad}_{G_1}$ of the groupoid
$G_1$ on itself, namely 
\[ \opn{Ad}_{G_1}(g)(h) := g \circ h \circ g^{-1} . \]

\begin{dfn} \label{dfn:cosim.101}
A {\em crossed groupoid} is a structure 
\[ G = 
( G_1, G_2, 
\opn{Ad}_{G_1 \crvar G_{2}}, \opn{D} ) \]
consisting of:
\begin{itemize}
\item Groupoids $G_1$ and $G_2$, such that 
$G_2$ is totally disconnected, and 
$\opn{Ob}(G_1) = \opn{Ob}(G_2)$. 
We write $\opn{Ob}(G) := \opn{Ob}(G_1)$. 

\item An action $\opn{Ad}_{G_1 \crvar G_{2}}$ of 
$G_1$ on $G_2$, called the {\em twisting}. 

\item A morphism of groupoids (i.e.\ a functor) 
$\opn{D} : G_2 \to G_1$
called the {\em feedback}, which is the identity on objects.
\end{itemize}

These are the conditions:
\begin{enumerate}
\rmitem{i} The morphism $\opn{D}$ is $G_1$-equivariant with respect to
the actions $\opn{Ad}_{G_1 \crvar G_{2}}$ and
$\opn{Ad}_{G_1}$. Namely 
\[ \opn{D}(\opn{Ad}_{G_1 \crvar G_{2}}(g)(a)) =
\opn{Ad}_{G_1}(g)(\opn{D}(a)) \]
in the group $G_1(y)$, for any $x, y \in \opn{Ob}(G)$, 
$g \in G_1(x, y)$ and $a \in G_2(x)$.

\rmitem{ii} For any $x \in \opn{Ob}(G)$ and 
$a \in G_2(x)$ there is equality
\[ \opn{Ad}_{G_1 \crvar G_{2}}(\opn{D}(a)) =
\opn{Ad}_{G_2(x)}(a) , \]
as automorphisms of the group $G_2(x)$.
\end{enumerate}
\end{dfn}

We sometimes refer to the morphisms in the groupoid $G_1$ as {\em
$1$-morphisms}, and to the morphisms in $G_2$ as {\em $2$-morphisms}. 

\begin{rem} \label{rem:cosim.101}
A crossed groupoid is better known as a {\em strict $2$-groupoid}, or a
{\em crossed module over a groupoid}, or a {\em $2$-truncated crossed
complex}; see \cite{Bw}. When
$\opn{Ob}(G)$ is a singleton then $G$ is just a crossed
module (or a crossed group). More on this in Remark \ref{rem:11}.
\end{rem}

\begin{dfn}
Suppose 
$H = ( H_1, H_2, \opn{Ad}_{H_1 \crvar H_{2}}, \opn{D} )$ 
is another crossed groupoid. A {\em morphism of crossed groupoids}
$\Phi : G \to H$
is a pair of morphisms of groupoids 
$\Phi_i : G_i \to H_i$, $i = 1, 2$, that are equal on objects, and
respect the twistings and the feedbacks. 

We denote by
$\cat{CrGrpd}$ the category consisting of crossed groupoids and morphisms
between them.
\end{dfn}

We shall be interested in cosimplicial crossed groupoids, i.e.\ in objects of
the category $\bsym{\Delta}(\cat{CrGrpd})$. A cosimplicial crossed groupoid 
$G = \{ G^p \}_{p \in \mbb{N}}$
has a crossed groupoid $G^p$ in each simplicial dimension $p$. 
The morphisms of crossed groupoids $G(\bsym{i}) : G^p \to G^q$, 
for $\bsym{i} \in \bsym{\Delta}_p^q$, are implicit, and we use notation
(\ref{eqn:101}) for objects, $1$-morphisms and $2$-morphisms.

Let us fix $p \in \N$. Then for any 
$x \in \opn{Ob}(G^p)$ there is a group homomorphism  (the feedback)
\[ \opn{D} : G^p_2(x) \to  G^p_1(x) . \]
And for every morphism $g : x \to y$ in $G_1^p$ there is a group
isomorphism (the twisting)
\[ \opn{Ad}(g) = 
\opn{Ad}_{G^p_1 \curvearrowright G^p_{2}}(g) :
G^p_2(x) \to G^p_2(y) . \]
Note that we are using the expression $\opn{Ad}(g)$ to mean both 
$\opn{Ad}_{G^p_1 \curvearrowright G^p_{2}}(g)$
and 
$\opn{Ad}_{G^p_1}(g)$; hopefully that will not cause confusion.

\begin{dfn} \label{dfn:101}
Let $G = \{ G^p \}_{p \in \mbb{N}}$ be a cosimplicial
crossed groupoid. A {\em combinatorial descent datum} in $G$
is a triple $(x, g, a)$ of elements of the following sorts:
\begin{enumerate}
\rmitem{0}  $x \in \opn{Ob}(G^0)$. 

\rmitem{1} $g \in G^1_1(x_{(0)}, x_{(1)})$, where 
$x_{(0)}, x_{(1)} \in \opn{Ob}(G^1)$ are the objects corresponding
to the vertices $(0)$ and $(1)$ of $\bsym{\Delta}^1$.

\rmitem{2} $a \in G^2_2(x_{(0)})$, where 
$x_{(0)} \in \opn{Ob}(G^2)$ is the object corresponding
to the vertex $(0)$ of $\bsym{\Delta}^2$.
\end{enumerate}

The conditions are as follows:
\begin{enumerate}
\rmitem{i} (Failure of $1$-cocycle) 
\[ g_{(0, 2)}^{-1} \circ g_{(1, 2)} \circ g_{(0, 1)} = \opn{D}(a) \]
in the group $G^2_1(x_{(0)})$.
Here $x_{(i)} \in \opn{Ob}(G^2)$ 
and $g_{(i, j)} \in G^2_1(x_{(i)}, x_{(j)})$ 
correspond to the faces $(i)$ and $(i, j)$ respectively of $\bsym{\Delta}^2$.

\rmitem{ii} (Twisted $2$-cocycle) 
\[ a_{(0, 1, 3)}^{-1} \circ a_{(0, 2, 3)} \circ 
a_{(0, 1, 2)} = 
\opn{Ad}(g_{(0, 1)}^{-1})(a_{(1, 2, 3)}) \]
in the group
$G^3_2(x_{(0)})$.
Here $x_{(i)} \in \opn{Ob}(G^3)$, 
$g_{(i, j)} \in G^3_1(x_{(i)}, x_{(j)})$ and
$a_{(i, j, k)} \in G^3_2(x_{(i)})$
correspond to the faces $(i)$,  $(i, j)$ and $(i, j, k)$ respectively of
$\bsym{\Delta}^3$.
\end{enumerate}

We denote by $\opn{Desc}(G)$ the set of all
descent data in $G$. 
\end{dfn}

See Figure \ref{fig:2} for an illustration.

\begin{figure}
\begin{center}
\includegraphics[scale=0.55]{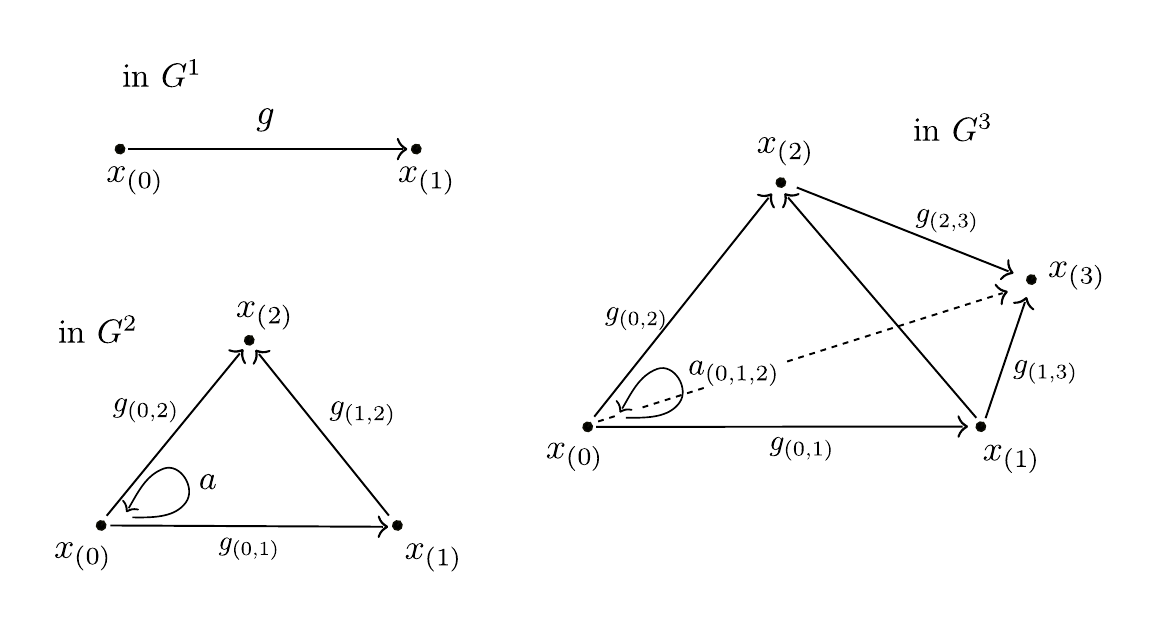}
\caption{Illustration of a combinatorial descent datum $(x, g, a)$ in the
cosimplicial crossed groupoid 
$G = \{ G^p \}_{p \in \mbb{N}}$.}
\label{fig:2}
\end{center}
\end{figure}

\begin{dfn} \label{dfn:100}
Let $(x, g, a)$ and $(x', g', a')$
be descent data in the cosimplicial crossed groupoid 
$G$. A gauge transformation
$(x, g, a) \to (x', g', a')$ is a pair $(f, c)$ 
of elements of the following sorts:
\begin{enumerate}
\rmitem{0}  $f \in G^0_1(x, x')$.

\rmitem{1} $c \in G^1_2(x_{(0)})$, where 
$x_{(0)} \in \opn{Ob}(G^1)$ is the object corresponding
to the vertex $(0)$ of $\bsym{\Delta}^1$.
\end{enumerate}

These two conditions must hold:
\begin{enumerate}
\rmitem{i}  
\[ g'  = f_{(1)} \circ g \circ \opn{D}(c) \circ f_{(0)}^{-1} \]
in the set
$G^1_1(x'_{(0)}, x'_{(1)})$.

\rmitem{ii}
\[ a' =   
\opn{Ad}(f_{(0)}) \Bigl( c_{(0, 2)}^{-1} \circ a \circ 
\opn{Ad}(g_{(0, 1)}^{-1})(c_{(1, 2)}) \circ c_{(0, 1)} \Bigr) \]
in the group $G^2_2(x'_{(0)})$.
\end{enumerate}
\end{dfn}

This is illustrated in Figure \ref{fig:3}.

\begin{figure}
\begin{center}
\includegraphics[scale=0.55]{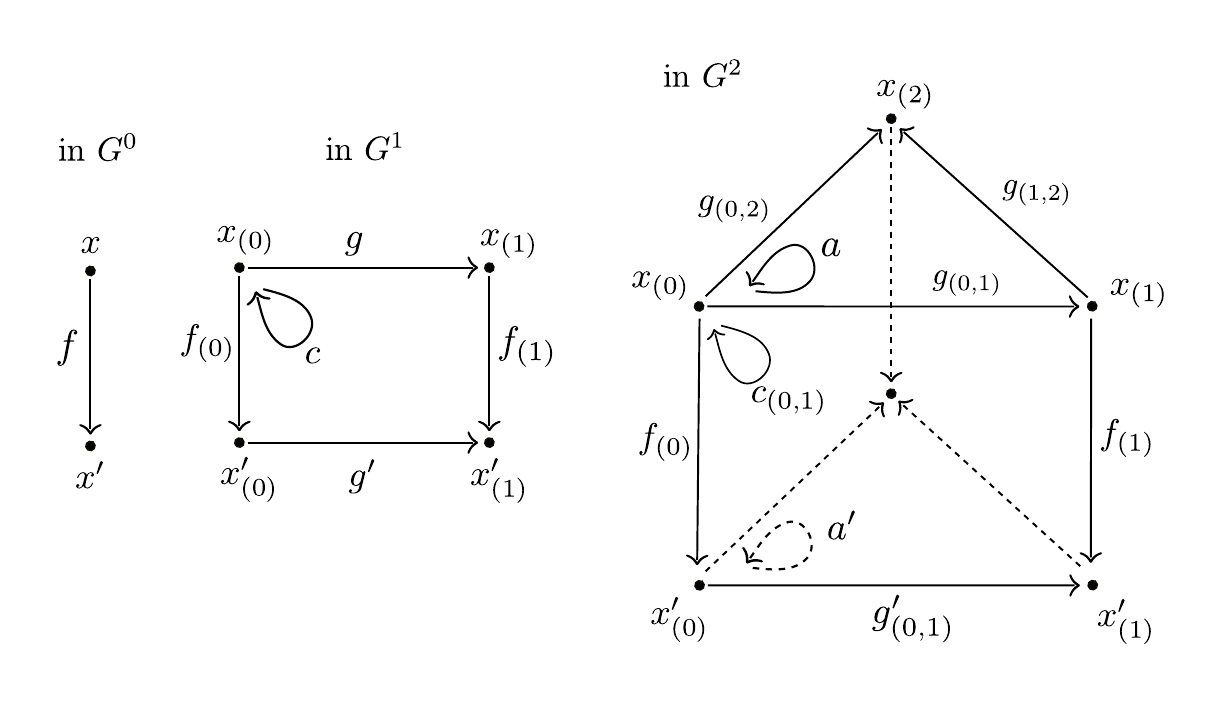}
\caption{Illustration of a gauge transformation 
$(f, c) : (x, g, a) \to (x', g', a')$
between descent data.}
\label{fig:3}
\end{center}
\end{figure}

\begin{prop}
Let $G$ be a cosimplicial crossed groupoid. The gauge 
transformations form an equivalence relation on the set
$\opn{Desc}(G)$.
\end{prop}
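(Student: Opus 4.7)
\medskip
\noindent \textbf{Proof plan.}
The plan is to exhibit explicit formulas for the identity, inverse, and composition of gauge transformations, and then to verify that these satisfy conditions (i) and (ii) of Definition \ref{dfn:100} by invoking the crossed-groupoid axioms together with cosimpliciality (i.e.\ functoriality of the pushforward $c \mapsto c_{\bsym{i}}$).

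For \emph{reflexivity}, I take $(f, c) := (\bsym{1}_x, \bsym{1}_{x_{(0)}})$, the identity $1$-morphism on $x$ and the identity $2$-morphism on $x_{(0)}$. Since the functors $\twocat{G}(\bsym{i})$ preserve identities, $f_{(j)} = \bsym{1}$ and $c_{\bsym{i}} = \bsym{1}$ for all faces. The feedback $\opn{D}$ is a functor, so $\opn{D}(\bsym{1}) = \bsym{1}$, and $\opn{Ad}(\bsym{1})$ is the identity automorphism; both conditions (i) and (ii) then collapse to $g' = g$ and $a' = a$.

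For \emph{symmetry}, given $(f, c) : (x, g, a) \to (x', g', a')$, I set
\[ \tilde{f} := f^{-1}, \qquad \tilde{c} := \opn{Ad}(f_{(0)})(c^{-1}) . \]
The reason for this choice is that condition (i) can be rearranged to
$g = f_{(1)}^{-1} \circ g' \circ f_{(0)} \circ \opn{D}(c)^{-1}$,
and using the equivariance $\opn{D} \circ \opn{Ad}(f_{(0)}) = \opn{Ad}(f_{(0)}) \circ \opn{D}$ (axiom (i) of Definition \ref{dfn:cosim.101}) together with $f_{(0)} \circ h \circ f_{(0)}^{-1} = \opn{Ad}(f_{(0)})(h)$, this rewrites precisely as condition (i) for $(\tilde{f}, \tilde{c})$. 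Condition (ii) is then checked by substituting the formula for $a'$ into the required identity for $a$ and using both crossed-groupoid axioms: axiom (ii) allows the inner conjugation by $c_{(0,1)}$ and $c_{(1,2)}$ to be absorbed, while the fact that $\opn{Ad}$ is a group action handles the outer $\opn{Ad}(f_{(0)})$.

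For \emph{transitivity}, given $(f, c) : (x, g, a) \to (x', g', a')$ and $(f', c') : (x', g', a') \to (x'', g'', a'')$, I set
\[ \hat{f} := f' \circ f, \qquad \hat{c} := c \circ \opn{Ad}(f_{(0)}^{-1})(c'). \]
A direct substitution into the formula for $g''$ in terms of $g'$ and then of $g$, again using $\opn{D} \circ \opn{Ad} = \opn{Ad} \circ \opn{D}$, yields condition (i) for $(\hat{f}, \hat{c})$. Condition (ii) requires the same kind of bookkeeping as in the symmetry step: after inserting the formulas for $a''$ in terms of $a'$ and then $a'$ in terms of $a$, one collects the inner conjugations using axiom (ii), reassembles the pushforwards $\hat{c}_{(0,1)}$, $\hat{c}_{(0,2)}$, $\hat{c}_{(1,2)}$ (each of which is built from pushforwards of $c$ and $c'$ by functoriality), and pulls out the outer $\opn{Ad}(\hat{f}_{(0)}) = \opn{Ad}(f'_{(0)}) \circ \opn{Ad}(f_{(0)})$.

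The only real obstacle is bookkeeping: verifying condition (ii) in the symmetry and transitivity steps is a long but mechanical manipulation of expressions in $\twocat{G}^2_2(x_{(0)})$, repeatedly applying the Peiffer-type identity $\opn{Ad}(\opn{D}(a)) = \opn{Ad}(a)$ (axiom (ii)) to move inner conjugations across, and the equivariance (axiom (i)) to move $\opn{D}$ past $\opn{Ad}$, all while keeping track of which $2$-simplex face each symbol has been pushed along. No new idea beyond the crossed-module calculus is needed.
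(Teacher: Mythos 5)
Your proposal is correct and takes essentially the same route as the paper: the reflexivity pair, the inverse $\bigl( f^{-1}, \opn{Ad}(f_{(0)})(c^{-1}) \bigr)$, and the composite $\bigl( f' \circ f,\, c \circ \opn{Ad}(f_{(0)}^{-1})(c') \bigr)$ are exactly the formulas the paper writes down. The paper in fact states these three formulas without verifying conditions (i) and (ii) of Definition \ref{dfn:100}, so your sketch of the verification (via the equivariance axiom, the Peiffer-type identity, and functoriality of the face pushforwards) is, if anything, more detailed than the published proof.
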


We call this relation {\em gauge equivalence}.
Actually $\opn{Desc}(G)$ has a lot more structure; see Remark \ref{rem:cosim.1}.

\begin{proof}
Given a descent datum $(x, g, a)$, the pair 
$(1_x, 1_{x_{(0)}})$, where $1_x \in G^0_1(x)$ and 
$1_{x_{(0)}} \in G^1_2(x_{(0)})$ are the identity elements of these groups,
is a gauge transformation from 
$(x, g, a)$ to itself. Next let
\[ (f, c) : (x, g, a) \to (x', g', a') \]
and
\[ (f', c') : (x', g', a') \to (x'', g'', a'') \]
be gauge transformations between descent data. Then 
\[ \bigl( f' \circ f \, , \, c \circ f_{(0)}^{-1}(c') \bigr) : (x, g, a) \to
(x'', g'', a'') \]
is a gauge transformation. And 
\[ \bigl( f^{-1}, f_{(0)}(c^{-1}) \bigr) : (x', g', a') \to (x, g, a) \]
is a gauge transformation. 
\end{proof}

Let $\Phi : G \to H$ be a morphism of cosimplicial
crossed groupoids. Given a descent datum 
$(x, g, a) \in \opn{Desc}(G)$, the triple 
\[ \Phi(x, g, a) := (\Phi(x), \Phi(g), \Phi(a)) \] 
is a descent datum in $H$. The resulting function 
\[ \opn{Desc}(\Phi) : \opn{Desc}(G) \to \opn{Desc}(H)  \]
respects the gauge equivalence relations. 

\begin{dfn} \label{dfn:cosim.1}
For a cosimplicial crossed groupoid $G$ we write
\[ \ol{\opn{Desc}}(G) :=
\frac{ \opn{Desc}(G) } {\tup{ gauge equivalence}} . \]
For a morphism $\Phi : G \to H$
of cosimplicial crossed groupoids, we denote by 
\[ \ol{\opn{Desc}}(\Phi) : \ol{\opn{Desc}}(G) \to
\ol{\opn{Desc}}(H) \]
the induced function. 
\end{dfn}

\section{The Main Theorem}
\label{sec:equiv-cosim}

Recall that for a groupoid $G$, the set of isomorphism classes of objects
is denoted by $\bsym{\bsym{\pi}}_0(G)$.

\begin{dfn} \label{dfn:equiv-cosim.101}
Let 
$G = 
( G_1, G_2, 
\opn{Ad}_{G_1 \crvar G_{2}}, \opn{D} )$
be a crossed groupoid. 
We define the homotopy set
\[ \bsym{\bsym{\pi}}_0(G) := \bsym{\bsym{\pi}}_0(G_1) , \]
and the homotopy groups
\[ \bsym{\bsym{\pi}}_1(G, x) := 
\opn{Coker} \bigl( \opn{D} : G_2(x) \to G_1(x) \bigr) \]
and
\[ \bsym{\bsym{\pi}}_2(G, x) := 
\opn{Ker} \bigl( \opn{D} : G_2(x) \to G_1(x) \bigr) \]
for $x \in \opn{Ob}(G)$.
\end{dfn}

The set $\bsym{\bsym{\pi}}_0(G)$ and the groups $\bsym{\bsym{\pi}}_i(G, x)$ are
functorial in $G$. 
The group $\bsym{\bsym{\pi}}_2(G, x)$ is central in $G_2(x)$, 
and in particular it is abelian.

\begin{dfn} \label{dfn:equiv-cosim.102}
A morphism of crossed groupoids $\Phi : G \to H$  
is called a {\em weak equivalence} if the function
\[ \bsym{\bsym{\pi}}_0(\Phi)  : \bsym{\bsym{\pi}}_0(G) \to
\bsym{\bsym{\pi}}_0(H)  \]
is bijective, and the group homomorphisms
\[ \bsym{\bsym{\pi}}_i(\Phi, x) : \bsym{\bsym{\pi}}_i(G, x) \to
\bsym{\bsym{\pi}}_i(H, \Phi(x))  \]
are bijective for all $x \in \opn{Ob}(G)$
and $i \in \{ 1, 2 \}$.
\end{dfn}

\begin{dfn} \label{dfn:equiv-cosim.103}
A morphism $\Phi : G \to H$
of cosimplicial crossed groupoids is called a {\em weak equivalence} if in
every simplicial dimension $p$ the morphism of crossed groupoids
 $\Phi^p : G^p \to H^p$ is a weak equivalence.
\end{dfn}

\begin{thm}[Equivalence] \label{thm:100} 
Let $\Phi : G \to H$ be a weak equivalence between 
cosimplicial crossed groupoids. Then the function 
\[ \ol{\opn{Desc}}(\Phi) : \ol{\opn{Desc}}(G) \to
\ol{\opn{Desc}}(H) \]
from Definition \tup{\ref{dfn:cosim.1}} is bijective.
\end{thm}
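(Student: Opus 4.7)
My plan is to establish surjectivity and injectivity of $\ol{\opn{Desc}}(F)$ separately, each by a stepwise obstruction argument that climbs the cosimplicial dimensions $p=0,1,2,3$ and invokes the hypothesis ``$\pi_i(F^p,-)$ bijective'' at precisely the step where a given piece of data first appears.

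\textbf{Surjectivity.} Given $(y,h,b)\in\opn{Desc}(\twocat{H})$, I would construct a descent datum $(x,g,a)\in\opn{Desc}(\twocat{G})$ together with a gauge transformation $(f,c):F(x,g,a)\to(y,h,b)$ in four stages. First, bijectivity of $\pi_0(F^0)$ yields $x\in\opn{Ob}(\twocat{G}^0)$ and some $f\in\twocat{H}^0_1(F(x),y)$. Second, the conjugate $f_{(1)}^{-1}\circ h\circ f_{(0)}$ lies in $\twocat{H}^1_1(F(x_{(0)}),F(x_{(1)}))$; bijectivity of $\pi_0(F^1)$ places $x_{(0)},x_{(1)}$ in the same component of $\twocat{G}^1$, and the surjectivity part of $\pi_1(F^1,-)$ then supplies $g\in\twocat{G}^1_1(x_{(0)},x_{(1)})$ and $c\in\twocat{H}^1_2(F(x_{(0)}))$ satisfying Definition \ref{dfn:100}(i). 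Third, setting $u:=g_{(0,2)}^{-1}\circ g_{(1,2)}\circ g_{(0,1)}\in\twocat{G}^2_1(x_{(0)})$, Definition \ref{dfn:101}(i) applied to $(y,h,b)$ together with Step 2 forces $F(u)\in\opn{D}(\twocat{H}^2_2)$, so injectivity of $\pi_1(F^2,x_{(0)})$ produces $a_0\in\twocat{G}^2_2(x_{(0)})$ with $\opn{D}(a_0)=u$; then $F(a_0)$ and the ``target'' conjugate of $b$ through the $f_{(i)}$ and $c_{(i,j)}$ have the same $\opn{D}$-image, and bijectivity of $\pi_2(F^2,x_{(0)})$ corrects $a_0$ by a central element into an $a$ with the required $F(a)$. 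Fourth, the discrepancy $E\in\twocat{G}^3_2(x_{(0)})$ between the two sides of Definition \ref{dfn:101}(ii) for $(x,g,a)$ has $\opn{D}(E)=\bsym{1}$ by the crossed groupoid axioms combined with condition (i), and $F(E)=\bsym{1}$ because $(y,h,b)$ itself satisfies (ii); injectivity of $\pi_2(F^3,x_{(0)})$ then forces $E=\bsym{1}$.

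\textbf{Injectivity.} Suppose $F(x,g,a)$ and $F(x',g',a')$ are gauge equivalent in $\twocat{H}$ via some $(f,c)$. I would first use bijectivity of $\pi_0(F^0)$ to pick $\tilde{f}_0\in\twocat{G}^0_1(x,x')$, then apply the gauge transformation $(\tilde{f}_0,\bsym{1})$ in $\twocat{G}$ to $(x,g,a)$ and relabel, reducing to the case $x=x'$ in which $f$ becomes a loop at $F(x)$. Surjectivity of $\pi_1(F^0,x)$ then produces $\tilde{f}\in\twocat{G}^0_1(x)$ with $[F(\tilde{f})]=[f]$ in $\pi_1(\twocat{H}^0,F(x))$; analyzing Definition \ref{dfn:100}(i) for $(f,c)$ and invoking injectivity of $\pi_1(F^1,x_{(0)})$ yields $\tilde{c}\in\twocat{G}^1_2(x_{(0)})$ such that $(\tilde{f},\tilde{c})$ satisfies Definition \ref{dfn:100}(i) in $\twocat{G}$. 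The remaining condition \ref{dfn:100}(ii) is verified by the same $\pi_2(F^2,x_{(0)})$-injectivity argument used at the end of the surjectivity proof.

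\textbf{Main obstacle.} I expect the main technical hurdle to be the bookkeeping in Step 3 of surjectivity and its injectivity counterpart: each lifted or corrected element must be chosen so that its cosimplicial pushforwards remain compatible with the cocycle conditions one dimension higher. In particular, verifying $\opn{D}(E)=\bsym{1}$ in Step 4 will be a lengthy identity in $\twocat{G}^3_1$ juggling all four face maps from $\bsym{\Delta}^2$ into $\bsym{\Delta}^3$ together with both axioms of Definition \ref{dfn:cosim.101}, including the identification of $\opn{Ad}(\opn{D}(a))$ with inner conjugation by $a$. Once this identity is in hand, the remainder of the argument will be a routine stepwise lifting controlled entirely by the assumed bijectivities on $\pi_0,\pi_1,\pi_2$.
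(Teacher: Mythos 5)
Your overall architecture coincides with the paper's: climb the cosimplicial dimensions $p=0,1,2,3$, invoking bijectivity of $\pi_0(F^p)$, relative surjectivity/injectivity of $\pi_1(F^p,-)$, and bijectivity of $\pi_2(F^p,-)$ exactly where each constituent of a descent datum first appears, and kill the residual cocycle obstructions by injectivity of $\pi_2$ in dimensions $2$ and $3$ (your Steps 2--3 inline what the paper isolates as Lemma \ref{lem:cosim-equiv.100}). But there is a genuine gap at the heart of Step 4. You assert that $F(E)=\bsym{1}$ ``because $(y,h,b)$ itself satisfies (ii)''. However, $F(E)$ is the condition-(ii) discrepancy of the triple $F(x,g,a)$, and this triple is \emph{not} $(y,h,b)$: it is only gauge-conjugate to it via $(f,c)$. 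To transfer condition (ii) of Definition \ref{dfn:101} from $(y,h,b)$ to its gauge transport, you must prove that the twisted $2$-cocycle condition is stable under gauge transformations. That statement is exactly the paper's Lemma \ref{lem:104}, and its proof is the longest computation in the paper, using both axioms of Definition \ref{dfn:cosim.101}, the commutation identity (\ref{eqn:103}) (an instance of $\opn{Ad}(\opn{D}(a))=\opn{Ad}(a)$), and a delicate cancellation of conjugated terms. Your ``main obstacle'' paragraph inverts where the work lies: $\opn{D}(E)=\bsym{1}$ is a short telescoping computation once condition (i) holds for $(x,g,a)$, since each $\opn{D}(a_{(i,j,k)})$ is a product of $g$'s and equivariance of $\opn{D}$ makes everything cancel; the lengthy identity is the one you take for granted, namely $F(E)=\bsym{1}$. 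The same unproved transport-invariance is used again in your injectivity argument, when you ``apply the gauge transformation $(\til{f}_0,\bsym{1})$ to $(x,g,a)$ and relabel''.

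There is a second, smaller gap at the end of your injectivity argument. You produce $\til{c}\in\twocat{G}^1_2(x_{(0)})$ only from injectivity of $\pi_1(F^1,x_{(0)})$, which fixes $\opn{D}(\til{c})$ but gives no control over $F(\til{c})$ itself. The concluding appeal to injectivity of $\pi_2(F^2,x_{(0)})$ requires the premise that $F$ of the Definition \ref{dfn:100}(ii)-discrepancy of $(\til{f},\til{c})$ vanishes, and that premise needs $F(\til{c})$ to equal the (suitably adjusted) $\twocat{H}$-side $2$-morphism on the nose. If they differ by a central element $w$ with $\opn{D}(w)=\bsym{1}$, the discrepancy maps under $F$ to
$w_{(0,2)}\circ \opn{Ad}\bigl(F(g)_{(0,1)}^{-1}\bigr)(w_{(1,2)})^{-1}\circ w_{(0,1)}^{-1}$,
a product of central terms that need not be trivial. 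The paper closes exactly this hole with an extra correction using bijectivity of $\pi_2(F^1,x_{(0)})$: it finds $v$ with $\opn{D}(v)=\bsym{1}$ and $F(v)=w$, and replaces the first lift $d'$ by $d:=v\circ d'$, so that $F(d)$ equals the $\twocat{H}$-side $2$-morphism exactly. You use this very correction trick in Step 3 of surjectivity, but omit it in the injectivity half, where it is equally indispensable.
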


We need a couple of auxiliary results first. 
A partial descent datum in $G$ is a pair $(x, g)$ of elements
$x \in \opn{Ob}(G^0)$ and 
$g \in G^1_1(x_{(0)}, x_{(1)})$ (cf.\ Definition \ref{dfn:101}).
Let $(x, g)$ and $(x', g')$ be partial descent data. A partial gauge
transformation $(x, g) \to (x', g')$ is a pair $(f, c)$ of elements 
as in Definition \tup{\ref{dfn:100}}, that satisfies condition \tup{(i)} of that
definition.

The next lemma is a sort of ``Kan condition'' satisfied by 
$\opn{Desc}(G)$.

\begin{lem} \label{lem:104}
Let $(x, g, a)$ be a descent datum in the cosimplicial crossed groupoid
$G$, let $(x', g')$ be a partial
descent datum in $G$, and let $(f, c)$ be a partial gauge
transformation $(x, g) \to (x', g')$.  Then there is a unique element 
$a' \in G^2_2(x'_{(0)})$ such that the triple 
$(x', g', a')$ is a descent datum in $G$, and 
$(f, c)$ is a gauge transformation
$(x, g, a) \to (x', g', a')$.
\end{lem}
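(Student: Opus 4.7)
The uniqueness assertion is immediate: condition (ii) of Definition~\ref{dfn:100} forces
\[ a' := \opn{Ad}(f_{(0)}) \Bigl( c_{(0, 2)}^{-1} \circ a \circ \opn{Ad}(g_{(0, 1)}^{-1})(c_{(1, 2)}) \circ c_{(0, 1)} \Bigr) \in \twocat{G}^2_2(x'_{(0)}) , \]
and this is my definition of $a'$. With this choice, condition (i) of Definition~\ref{dfn:100} holds by hypothesis (it is built into the notion of a partial gauge transformation), and condition (ii) holds by construction. So the entire content of the lemma is to show that $(x', g', a')$ satisfies the two conditions of Definition~\ref{dfn:101}.

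For condition (i) of Definition~\ref{dfn:101} (failure of the $1$-cocycle for $(x', g', a')$), I would apply $\opn{D}$ to the defining formula for $a'$ and use: (a) that $\opn{D}$ is a functor and is $\twocat{G}^2_1$-equivariant by Definition~\ref{dfn:cosim.101}(i), so $\opn{D} \circ \opn{Ad}(g) = \opn{Ad}(g) \circ \opn{D}$; (b) the failure-of-$1$-cocycle identity for $(x,g,a)$; and (c) the formula $g' = f_{(1)} \circ g \circ \opn{D}(c) \circ f_{(0)}^{-1}$ pushed to each edge of $\bsym{\Delta}^2$, which expresses $g'_{(i,j)}$ in terms of $g_{(i,j)}$, $c_{(i,j)}$, and $f_{(i)}, f_{(j)}$. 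After cancellation of consecutive $f_{(i)}^{-1} \circ f_{(i)}$ pairs along the boundary of the $2$-simplex, both sides reduce to the same expression.

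The hard part is condition (ii) of Definition~\ref{dfn:101}, the twisted $2$-cocycle equation for $(x', g', a')$ in $\twocat{G}^3_2(x'_{(0)})$. Each of the four factors $a'_{(0,1,2)}, a'_{(0,1,3)}, a'_{(0,2,3)}, a'_{(1,2,3)}$ must be expanded via the defining formula for $a'$ pushed to the corresponding $2$-face of $\bsym{\Delta}^3$, producing products of $\opn{Ad}(f_{(i)})$-conjugates of $a_{(i,j,k)}$-terms and various $c_{(i,j)}$-terms. The systematic way to collapse these is: first, use the twisted $2$-cocycle identity for $a$ itself to replace the $a$-terms by a single combined expression; second, use Definition~\ref{dfn:cosim.101}(ii), $\opn{Ad}(\opn{D}(c)) = \opn{Ad}_{\twocat{G}^3_2}(c)$, to rewrite any stray $\opn{Ad}(\opn{D}(c_{(i,j)}))$ that appears when one commutes $\opn{Ad}(g'_{(0,1)}^{-1})$ past the $c$-terms (note $g'_{(0,1)} = f_{(1)} g_{(0,1)} \opn{D}(c_{(0,1)}) f_{(0)}^{-1}$), turning it into conjugation by $c_{(i,j)}$; and third, use $\opn{Ad}_{\twocat{G}^3_1}$-equivariance of $\opn{D}$ again where needed. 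A careful bookkeeping of the $c_{(i,j)}$-terms around the boundary of the $3$-simplex shows that they telescope exactly so as to reproduce the twisted $2$-cocycle identity for $a'$. This is a direct but tedious diagram-chase, and it is where essentially all the content of the crossed-groupoid axioms (i) and (ii) of Definition~\ref{dfn:cosim.101} is consumed; I expect it to be the main obstacle, though no new idea beyond those axioms is required.
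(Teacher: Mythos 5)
Your strategy coincides with the paper's own proof: you define $a'$ by the same formula (which indeed settles uniqueness and makes condition (ii) of Definition~\ref{dfn:100} automatic), you verify condition (i) of Definition~\ref{dfn:101} exactly as the paper does (apply $\opn{D}$, use $\twocat{G}_1$-equivariance of $\opn{D}$, the edge formulas for $g'_{(i,j)}$, and cancellation), and your opening moves on condition (ii) of Definition~\ref{dfn:101} --- expanding the four faces $a'_{(i,j,k)}$, using $(g'_{(0,1)})^{-1}\circ f_{(1)} = f_{(0)}\circ \opn{D}(c_{(0,1)}^{-1})\circ g_{(0,1)}^{-1}$, and invoking Definition~\ref{dfn:cosim.101}(ii) to turn $\opn{Ad}(\opn{D}(c_{(0,1)}^{-1}))$ into conjugation by $c_{(0,1)}^{-1}$ --- are precisely the paper's computation of $\opn{Ad}\bigl((g'_{(0,1)})^{-1}\bigr)(a'_{(1,2,3)})$.

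The gap is in your final step: the $c$-terms do \emph{not} ``telescope exactly'' by bookkeeping. After all adjacent inverse pairs are cancelled, one is left (inside an overall conjugation by $\opn{Ad}(f_{(0)})$ and by a $c_{(1,3)}$-term) with
\[ a_{(0,1,3)}^{-1} \circ a_{(0,2,3)} \circ \opn{Ad}(g_{(0,2)}^{-1})(c_{(2,3)}) \circ a_{(0,1,2)} \circ \opn{Ad}\bigl(g_{(0,1)}^{-1}\circ g_{(1,2)}^{-1}\bigr)(c_{(2,3)}^{-1}) \circ \opn{Ad}(g_{(0,1)}^{-1})(a_{(1,2,3)}^{-1}) , \]
in which the two $c_{(2,3)}$-terms are conjugated by \emph{different} $1$-morphisms ($g_{(0,2)}^{-1}$ versus $g_{(0,1)}^{-1}\circ g_{(1,2)}^{-1}$) and are separated by $a_{(0,1,2)}$; they cannot cancel, and the $2$-cocycle identity for $a$ cannot yet be applied because $a_{(0,2,3)}$ and $a_{(0,1,2)}$ are not adjacent. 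The missing ingredient is the commutation relation
\[ \opn{Ad}(g_{(0,2)}^{-1})(c_{(2,3)}) \circ a_{(0,1,2)} = a_{(0,1,2)} \circ \opn{Ad}\bigl(g_{(0,1)}^{-1}\circ g_{(1,2)}^{-1}\bigr)(c_{(2,3)}) \]
(formula (\ref{eqn:103}) in the paper), whose proof requires applying Definition~\ref{dfn:cosim.101}(ii) to the $2$-morphism $a_{(0,1,2)}$ \emph{itself} --- so that conjugation by $a_{(0,1,2)}$ inside the group $\twocat{G}^3_2(x_{(0)})$ equals $\opn{Ad}(\opn{D}(a_{(0,1,2)}))$ --- and then re-using condition (i) of Definition~\ref{dfn:101} for $(x,g,a)$ to rewrite $\opn{D}(a_{(0,1,2)})$ as $g_{(0,2)}^{-1}\circ g_{(1,2)}\circ g_{(0,1)}$. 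Your toolkit invokes axiom (ii) only for the $c$-terms produced by $g'_{(0,1)}$, and never re-uses the failure-of-$1$-cocycle identity inside the verification of the $2$-cocycle, so this move is genuinely absent from the plan. Once it is added, the two $c_{(2,3)}$-terms cancel, the twisted $2$-cocycle identity for $a$ applies, and the proof closes exactly as in the paper.
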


\begin{proof}
Define 
\[ a' := \opn{Ad}(f_{(0)}) \Bigl( 
c_{(0, 2)}^{-1} \circ a \circ \opn{Ad}(g_{(0, 1)}^{-1})(c_{(1, 2)})
\circ c_{(0, 1)}  \Bigr) \in G^2_2(x'_{(0)}) . \]
Then $a'$ satisfies condition (ii) of Definition \ref{dfn:100}, and moreover it
is unique. 

We have to show that the triple $(x', g', a')$ is a descent
datum. Let us check condition (i) of Definition \ref{dfn:101}. We have 
\[ \begin{aligned}
& (g'_{(0, 2)})^{-1} \circ g'_{(1, 2)} \circ g'_{(0, 1)}
\\
& \qquad  \overset{\vartriangle}{=} \Bigl( f_{(2)} \circ g_{(0, 2)} \circ 
\opn{D}(c_{(0, 2)}) \circ f_{(0)}^{-1} \Bigr)^{-1} 
\circ \Bigl( f_{(2)} \circ g_{(1, 2)} \circ  \opn{D}(c_{(1, 2)}) 
\circ f_{(1)}^{-1} \Bigr)
\\
& \qquad \qquad \circ \Bigl( f_{(1)} \circ g_{(0, 1)} \circ  
\opn{D}(c_{(0, 1)}) \circ f_{(0)}^{-1} \Bigr)
\\
& \qquad \overset{\lozenge}{=} f_{(0)} \circ \opn{D}(c_{(0, 2)})^{-1}
\circ  
g_{(0, 2)}^{-1} \circ g_{(1, 2)} \circ \opn{D}(c_{(1, 2)})
\circ g_{(0, 1)} \circ \opn{D}(c_{(0, 1)}) \circ f_{(0)}^{-1}
\\
& \qquad \overset{\heartsuit}{=} \opn{Ad}(f_{(0)}) \Bigl( 
\opn{D}(c_{(0, 2)}^{-1}) \circ 
\opn{D}(a) \circ g_{(0, 1)}^{-1} \circ \opn{D}(c_{(1, 2)})
\circ g_{(0, 1)} \circ \opn{D}(c_{(0, 1)}) \Bigr) 
\\
& \qquad \overset{\star}{=} \opn{Ad}(f_{(0)}) \Bigl( 
\opn{D}(c_{(0, 2)}^{-1}) \circ 
\opn{D}(a) \circ \opn{Ad}(g_{(0, 1)}^{-1})(\opn{D}(c_{(1, 2)})) 
\circ \opn{D}(c_{(0, 1)}) \Bigr) 
\\
& \qquad \overset{\square}{=} \opn{D} \Bigl( \opn{Ad}(f_{(0)}) \Bigl( 
c_{(0, 2)}^{-1} \circ a \circ \opn{Ad}(g_{(0, 1)}^{-1})(c_{(1, 2)})
\circ c_{(0, 1)} \Bigr) \Bigr) 
\overset{\triangledown}{=} \opn{D}(a') . 
\end{aligned} \]
The equality marked $\overset{\vartriangle}{=}$ is true because of  
condition (i) of Definition \ref{dfn:100}, applied to the elements 
$g'_{(i, j)}$. The equality marked $\overset{\lozenge}{=}$ is true because of
cancellation. The equality marked $\overset{\heartsuit}{=}$ is because 
condition (i) of Definition \ref{dfn:101} holds for $(x, g, a)$, and by the
definition of $\opn{Ad}(f_{(0)})$. 
The equality marked $\overset{\star}{=}$ is by the
definition of $\opn{Ad}(g_{(0,1)}^{-1})$. 
The equality marked $\overset{\square}{=}$ is because $\opn{D}$ is 
$G^2_1$-equivariant
(this is condition (i) of Definition \ref{dfn:cosim.101}).
And the equality marked $\overset{\triangledown}{=}$ holds by definition of 
$a'$.

Finally we have to check that condition (ii) of Definition \ref{dfn:101} holds
for $(x', g', a')$. Namely, letting 
\begin{equation}
u' := 
(a'_{(0, 1, 3)})^{-1} \circ a'_{(0, 2, 3)} \circ a'_{(0, 1, 2)} \circ
\opn{Ad} \bigl( (g'_{(0, 1)})^{-1} \bigr) (a'_{(1, 2, 3)})^{-1} ,
\end{equation}
we have to show that $u' = 1$.

{}From the definition of $a'$ we get
\begin{equation} \label{eqn:102}
\begin{aligned}
& \opn{Ad} \bigl( (g'_{(0, 1)})^{-1} \bigr) (a'_{(1, 2, 3)}) 
\\ 
& \qquad = \opn{Ad} \bigl( (g'_{(0, 1)})^{-1} \circ f_{(1)} \bigr)
\Bigl( c_{(1, 3)}^{-1} \circ a_{(1, 2, 3)} \circ
\opn{Ad}(g_{(1, 2)}^{-1})(c_{(2, 3)}) \circ c_{(1, 2)} \Bigr)
\\
& \qquad \overset{\heartsuit}{=} \opn{Ad} \bigl( f_{(0)} 
\circ \opn{D}(c_{(0, 1)}^{-1}) \circ (g_{(0, 1)})^{-1} \bigr)
\\
& \qquad  \qquad
\Bigl( c_{(1, 3)}^{-1} \circ a_{(1, 2, 3)} \circ
\opn{Ad}(g_{(1, 2)}^{-1})(c_{(2, 3)}) \circ c_{(1, 2)}  \Bigr)
\\
& \qquad \overset{\lozenge}{=}  
\bigl( \opn{Ad} (f_{(0)}) \circ \opn{Ad}(\opn{D}(c_{(0, 1)}^{-1})) 
\circ \opn{Ad}(g_{(0, 1)})^{-1} \bigr) 
\\
& \qquad  \qquad
\Bigl( c_{(1, 3)}^{-1} \circ a_{(1, 2, 3)} \circ
\opn{Ad}(g_{(1, 2)}^{-1})(c_{(2, 3)}) \circ c_{(1, 2)} \Bigr)
\\
& \qquad \overset{\square}{=} \opn{Ad} \bigl( f_{(0)} \bigr)
\Bigl( c_{(0, 1)}^{-1} \circ 
\opn{Ad} \bigl( g_{(0, 1)}^{-1} \bigr)(c_{(1, 3)}^{-1}) \circ
\opn{Ad} \bigl( g_{(0, 1)}^{-1} \bigr)(a_{(1, 2, 3)}) 
\\
& \qquad \qquad \circ
\opn{Ad} \bigl( g_{(0, 1)}^{-1} \circ g_{(1, 2)}^{-1} \bigr)(c_{(2, 3)}) \circ
\opn{Ad} \bigl( g_{(0, 1)}^{-1} \bigr)(c_{(1, 2)}) 
\circ c_{(0, 1)} \Bigr) .
\end{aligned}
\end{equation}
The equality marked $\overset{\heartsuit}{=}$ is true because
\[  (g'_{(0, 1)})^{-1} \circ f_{(1)} = f_{(0)} \circ \opn{D}(c_{(0, 1)}^{-1})
\circ (g_{(0, 1)})^{-1} ; \]
this is from condition (i) of Definition \ref{dfn:100}.
The equality marked $\overset{\lozenge}{=}$ is because $\opn{Ad}$ is a group
homomorphism. And $\overset{\square}{=}$ is because 
$\opn{Ad}(\opn{D}(c_{(0, 1)}^{-1})) = \opn{Ad}(c_{(0, 1)}^{-1})$, 
which is an instance of condition (ii) of Definition \ref{dfn:cosim.101}.

A consequence of condition (ii) of Definition \ref{dfn:101} and 
condition (ii) of Definition \ref{dfn:cosim.101} is that 
\[ \begin{aligned}
& a_{(0, 1, 2)}^{-1} \circ c \circ a_{(0, 1, 2)} =
\opn{Ad}(a_{(0, 1, 2)}^{-1})(c) 
\\
& \qquad = \opn{Ad}(\opn{D}(a_{(0, 1, 2)}^{-1}))(c) = 
\opn{Ad}(g_{(0, 1)}^{-1} \circ g_{(1, 2)}^{-1} \circ g_{(0, 2)})(c)
\end{aligned} \]
for any $c \in G^2_2(x_{(0)})$. 
Therefore, taking 
$c := \opn{Ad}(g_{(0, 2)}^{-1})(c_{(2, 3)})$, we get
\begin{equation} \label{eqn:103}
\opn{Ad}(g_{(0, 2)}^{-1})(c_{(2, 3)}) \circ a_{(0, 1, 2)}
 = a_{(0, 1, 2)} \circ 
\opn{Ad}(g_{(0, 1)}^{-1} \circ g_{(1, 2)}^{-1})(c_{(2, 3)}) .
\end{equation}

By the definition of $a'$ and by formula (\ref{eqn:102}) we have
\[ \begin{aligned}
& u' = 
\opn{Ad} ( f_{(0)} )
\Bigl(  c_{(0, 3)}^{-1} \circ a_{(0, 1, 3)}
\circ \opn{Ad}(g_{(0, 1)}^{-1})(c_{(1, 3)}) \circ c_{(0, 1)} \Bigr)^{-1}
\\
& \qquad \qquad \circ
\opn{Ad} ( f_{(0)} )
\Bigl( c_{(0, 3)}^{-1}  \circ a_{(0, 2, 3)} \circ
\opn{Ad}(g_{(0, 2)}^{-1})(c_{(2, 3)}) \circ c_{(0, 2)} \Bigr)
\\
& \qquad \qquad \circ  
\opn{Ad} ( f_{(0)} )
\Bigl(  c_{(0, 2)}^{-1} \circ a_{(0, 1, 2)} \circ
\opn{Ad}(g_{(0, 1)}^{-1})(c_{(1, 2)})  \circ c_{(0, 1)} \Bigr)
\\
& \qquad \qquad \circ \opn{Ad} \bigl( f_{(0)} \bigr)
\Bigl( c_{(0, 1)}^{-1} \circ 
\opn{Ad} \bigl( g_{(0, 1)}^{-1} \bigr)(c_{(1, 3)}^{-1}) \circ
\opn{Ad} \bigl( g_{(0, 1)}^{-1} \bigr)(a_{(1, 2, 3)}) 
\\
& \qquad \qquad  \qquad \circ 
\opn{Ad} \bigl( g_{(0, 1)}^{-1} \circ g_{(1, 2)}^{-1} \bigr)(c_{(2, 3)}) \circ
\opn{Ad} \bigl( g_{(0, 1)}^{-1} \bigr)(c_{(1, 2)}) 
\circ c_{(0, 1)} \Bigr)^{-1} \ .
\end{aligned} \]
Canceling adjacent inverse terms we get
\[ \begin{aligned}
& u' = 
\opn{Ad} ( f_{(0)} ) \Bigl(  c_{(0, 1)}^{-1} \circ 
\opn{Ad}(g_{(0, 1)}^{-1})(c_{(1, 3)}^{-1}) \circ 
v' \circ
\opn{Ad} \bigl( g_{(0, 1)}^{-1} \bigr)(c_{(1, 3)})
\circ c_{(0, 1)} \Bigr) \ , 
\end{aligned} \]
where
\[ \begin{aligned}
& v' := 
a_{(0, 1, 3)}^{-1} \circ a_{(0, 2, 3)} \circ 
\opn{Ad}(g_{(0, 1)}^{-1} \circ g_{(1, 2)}^{-1})(c_{(2, 3)})
\circ a_{(0, 1, 2)} 
\\
& \qquad \qquad \circ
\opn{Ad} \bigl( g_{(0, 1)}^{-1} \circ g_{(1, 2)}^{-1} \bigr)(c_{(2, 3)}^{-1})
\circ \opn{Ad} \bigl( g_{(0, 1)}^{-1} \bigr)(a_{(1, 2, 3)}^{-1}) \ .
\end{aligned} \]
It suffices to prove that $v' = 1$. 
Using formula (\ref{eqn:103}) we have 
\[ \begin{aligned}
& v' =  a_{(0, 1, 3)}^{-1} \circ a_{(0, 2, 3)}
\circ a_{(0, 1, 2)} \circ 
\opn{Ad}(g_{(0, 1)}^{-1} \circ g_{(1, 2)}^{-1})(c_{(2, 3)})
\\
& \qquad \qquad \circ
\opn{Ad} \bigl( g_{(0, 1)}^{-1} \circ g_{(1, 2)}^{-1} \bigr)(c_{(2, 3)}^{-1})
\circ \opn{Ad} \bigl( g_{(0, 1)}^{-1} \bigr)(a_{(1, 2, 3)}^{-1}) \ .
\end{aligned} \]
We now cancel two adjacent inverse terms, and use the fact that condition
(ii) of Definition \ref{dfn:101} holds for $(x, g, a)$, to conclude that 
$v' = 1$.
\end{proof}

Suppose $G$ is a crossed groupoid and $x, x' \in \opn{Ob}(G)$.
There is a right action of the group $G_2(x)$ on the set 
$G_1(x, x')$, namely 
$g \mapsto g \circ \opn{D}(a)$ for $g \in G_1(x, x')$
and $a \in G_2(x)$. The quotient set is 
\begin{equation}
\bsym{\bsym{\pi}}_1(G, x, x') := G_1(x, x') / G_2(x) . 
\end{equation}
Given $g, g' \in G_1(x, x')$ let us define 
\begin{equation}
G_2(x)(g, g') := \{ a \in G_2(x) \mid 
g' = g \circ \opn{D}(a) \} . 
\end{equation}
So $\bsym{\bsym{\pi}}_1(G, x, x) = \bsym{\bsym{\pi}}_1(G, x)$ and 
$G_2(x)(1_x, 1_x) = \bsym{\bsym{\pi}}_2(G, x)$
in the notation of Definition \ref{dfn:equiv-cosim.101}.

\begin{lem} \label{lem:cosim-equiv.100}
Let $\Phi : G \to H$ be a weak equivalence between crossed
groupoids. Then the induced functions 
\[ \bsym{\bsym{\pi}}_1(\Phi, x, x') : \bsym{\bsym{\pi}}_1(G, x, x') \to
\bsym{\bsym{\pi}}_1 \bigl( H, \Phi(x), \Phi(x') \bigr)  \]
and 
\[ \Phi : G_2(x)(g, g') \to 
H_2 \bigl( \Phi(x) \bigr) \bigl( \Phi(g), \Phi(g') \bigr) \]
are bijective for all $x, x' \in \opn{Ob}(G)$ and 
$f, f' \in G_1(x, x')$. 
\end{lem}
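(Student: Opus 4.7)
The plan is to reduce both statements to the previously available bijectivity of $\pi_0(F)$, $\pi_1(F, x)$, and $\pi_2(F, x)$ by trivializing the ``hetero'' sets $\pi_1(\twocat{G}, x, x')$ and $\twocat{G}_2(x)(g, g')$ as torsors. The main obstacle is really only bookkeeping of the empty cases, together with checking that the natural ``translation'' bijections intertwine with $F$.

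First I would handle $\pi_1(F, x, x')$. If $\twocat{G}_1(x, x') = \emptyset$, then $[x] \ne [x']$ in $\pi_0(\twocat{G})$; since $\pi_0(F)$ is bijective we also have $[F(x)] \ne [F(x')]$, so $\twocat{H}_1(F(x), F(x')) = \emptyset$ and both sides are empty. Otherwise, choose any $h \in \twocat{G}_1(x, x')$. Left composition with $h$ defines a bijection $\twocat{G}_1(x, x) \iso \twocat{G}_1(x, x')$ that is equivariant for the right action of $\twocat{G}_2(x)$ by $\opn{D}$, hence descends to a bijection $\pi_1(\twocat{G}, x) \iso \pi_1(\twocat{G}, x, x')$. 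Similarly left composition with $F(h)$ gives a bijection $\pi_1(\twocat{H}, F(x)) \iso \pi_1(\twocat{H}, F(x), F(x'))$. These bijections fit into a square with $\pi_1(F, x)$ on the left and $\pi_1(F, x, x')$ on the right; the square commutes by functoriality, and since $\pi_1(F, x)$ is bijective, so is $\pi_1(F, x, x')$.

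Next I would treat $F : \twocat{G}_2(x)(g, g') \to \twocat{H}_2(F(x))(F(g), F(g'))$. If the source is empty, then $g$ and $g'$ represent distinct classes in $\pi_1(\twocat{G}, x, x')$; by the bijectivity of $\pi_1(F, x, x')$ just proved, $F(g)$ and $F(g')$ represent distinct classes in $\pi_1(\twocat{H}, F(x), F(x'))$, so the target is empty as well. If the source is nonempty, pick any $a_0 \in \twocat{G}_2(x)(g, g')$. Since any other $a$ in this set satisfies $\opn{D}(a \circ a_0^{-1}) = 1$, right multiplication by $a_0$ defines a bijection $\pi_2(\twocat{G}, x) \iso \twocat{G}_2(x)(g, g')$. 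Analogously, right multiplication by $F(a_0)$ gives a bijection $\pi_2(\twocat{H}, F(x)) \iso \twocat{H}_2(F(x))(F(g), F(g'))$. These two bijections sit in a commutative square whose left vertical arrow is $\pi_2(F, x)$, bijective by hypothesis. Therefore the right vertical arrow, which is the restriction of $F$, is bijective as well, completing the proof.
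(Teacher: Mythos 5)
Your proof is correct, and it is essentially the paper's own approach: the paper gives no written argument, only the remark that "this is the same as the usual proof for $2$-groupoids" with a citation to \cite[Lemma 1.1]{MS}, and the usual proof is exactly your torsor/translation argument (reduce the hetero hom-sets to the based $\pi_1$ and $\pi_2$ via left composition by a chosen $h$ and right multiplication by a chosen $a_0$, handle the empty cases by injectivity of $\pi_0(F)$ resp.\ $\pi_1(F,x,x')$, and conclude by commuting squares). In effect you have supplied the details the paper delegates to the reference, with no gaps.
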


\begin{proof}
This is the same as the usual proof for $2$-groupoids (cf.\ 
\cite[Lemma 1.1]{MS}).
\end{proof}

\begin{proof}[Proof of Theorem \tup{\ref{thm:100}}]
The proof is a ``nonabelian diagram chasing'', made possible by
Lemma \ref{lem:104}.

We begin by proving that the function $\ol{\opn{Desc}}(\Phi)$ is
surjective. Given a descent datum $(y, h, b) \in \opn{Desc}(H)$, we
have to find a descent datum $(x, g, a) \in \opn{Desc}(G)$, and a gauge
transformation $(f, c) :  (y, h, b) \to \Phi(x, g, a)$ in $H$. 

Since the function 
$\bsym{\bsym{\pi}}_0(\Phi^0) : \bsym{\bsym{\pi}}_0(G^0) \to
\bsym{\bsym{\pi}}_0(H^0)$
is surjective, there is an object $x \in \opn{Ob}(G^0)$, and a
$1$-morphism $f \in H^0_1(y, y')$, where 
$y' := \Phi(x) \in \opn{Ob}(H^0)$. 
Define 
\[ h'' := f_{(1)} \circ h \circ f_{(0)}^{-1} \in 
H^0_1(y'_{(0)}, y'_{(1)})  \]
and 
$c'' := 1_{y_{(0)}} \in H^1_2(y_{(0)})$. 
Then $(y', h'')$ is a partial descent datum in $H$, and 
$(f, c''): (y, h) \to (y', h'')$ is a partial gauge transformation.
According to Lemma \ref{lem:104} there is a unique element 
$b'' \in H^2_2(y'_{(0)})$ such that 
$(y', h'', b'')$ is a descent datum in $H$, and 
$(f, c''): (y, h, b) \to (y', h'', b'')$ is a gauge transformation.

Now by Lemma \ref{lem:cosim-equiv.100} the function 
\[ \bsym{\bsym{\pi}}_1(\Phi^1, x_{(0)}, x_{(1)}) : 
\bsym{\bsym{\pi}}_1(G^1, x_{(0)}, x_{(1)}) \to
\bsym{\bsym{\pi}}_1 ( H^1, y'_{(0)}, y'_{(1)}) \]
is surjective. Hence there are elements
$g \in G^1_1(x_{(0)}, x_{(1)})$ and 
$c' \in H^1_2(y'_{(0)})$
such that, letting 
$h' := \Phi(g) \in H^1_1(y'_{(0)}, y'_{(1)})$, 
we have 
$h'' = h' \circ \opn{D}(c')$.
Consider the partial gauge transformation 
$(1_{y'}, c') : (y', h'') \to (y', h')$.
Lemma \ref{lem:104} there is a unique element 
$b' \in H^2_2(y'_{(0)})$ such that 
$(y', h', b')$ is a descent datum in $H$, and 
$(1_{y'}, c') : (y', h'', b'') \to (y', h', b')$
is a gauge transformation. Let 
$c := \opn{Ad}(f_{(0)}^{-1})(c')^{-1} \in H^1_2(y_{(0)})$.
Then 
\[ (f, c) :  (y, h, b) \to (y', h', b') \]
is a gauge transformation in $H$, and $(y', h') = \Phi(x, g)$. 

By Lemma \ref{lem:cosim-equiv.100} the function
\[ \begin{aligned}
& \Phi^2 : G^2_2 (x_{(0)})
\bigl( 1_{x_{(0)}}, g_{(0, 2)}^{-1} \circ g_{(1, 2)} \circ g_{(0, 1)}
\bigr) \\
& \qquad \to 
H^2_2 (y'_{(0)})
\bigl( 1_{y'_{(0)}}, (h'_{(0, 2)})^{-1} \circ h'_{(1, 2)} 
\circ h'_{(0, 1)} \bigr)
\end{aligned} \]
is bijective. Let $a \in G^2_2 (x_{(0)})$
be the unique element such that 
\[ \opn{D}(a) = g_{(0, 2)}^{-1} \circ g_{(1, 2)} \circ g_{(0, 1)} \]
and $\Phi(a) = b'$. Then the triple of elements 
$(x, g, a)$ satisfies condition (i) of Definition \ref{dfn:101}, and 
$\Phi(x, g, a) = (y', h', b')$. 
Now the element 
\[ u := a_{(0, 1, 3)}^{-1} \circ a_{(0, 2, 3)} \circ 
a_{(0, 1, 2)} \circ \opn{Ad}(g_{(0, 1)}^{-1})(a_{(1, 2, 3)})^{-1} 
\in G^3_2(x_{(0)}) \]
satisfies $\opn{D}(u) = 1$, so it belongs to the subgroup 
$\bsym{\bsym{\pi}}_2(G^3, x_{(0)}) \subset G^3_2(x_{(0)})$.
Since the group homomorphism 
\[ \bsym{\bsym{\pi}}_2(\Phi^3, x_{(0)}) : \bsym{\bsym{\pi}}_2(G^3, x_{(0)}) \to 
\bsym{\pi}_2(H^3, y'_{(0)}) \]
is injective, and since 
\[ \begin{aligned}
& \bsym{\pi}_2(\Phi^3, x_{(0)})(u) = \Phi^3(u) =
\\
& \qquad  
(b'_{(0, 1, 3)})^{-1} \circ b'_{(0, 2, 3)} \circ 
b'_{(0, 1, 2)} \circ \opn{Ad}(h'_{(0, 1)})^{-1}(b'_{(1, 2, 3)})^{-1} = 1 ,
\end{aligned} \]
we conclude that $u = 1$. Thus the triple $(x, g, a)$ satisfies 
condition (ii) of Definition \ref{dfn:101}, so it is a descent datum in 
$G$. 

Now we prove that the function $\ol{\opn{Desc}}(\Phi)$ is
injective. Given 
\[ (x, g, a), (x', g', a') \in \opn{Desc}(G) , \]
define
$(y, h, b) := \Phi(x, g, a)$ and $(y', h', b') := \Phi(x', g', a')$. Assume we
are given a gauge transformation 
\[ (f, c) :  (y, h, b) \to (y', h', b') \]
in $H$. We have to produce a gauge transformation 
\[ (e, d) :  (x, g, a) \to (x', g', a') \]
in $G$.

We know that the function 
\[ \bsym{\pi}_1(\Phi^0, x, x') : 
\bsym{\pi}_1(G^0, x, x') \to
\bsym{\pi}_1 \bigl( H^0, y, y' \bigr) \]
is surjective. Therefore there is a $1$-morphism 
$e \in G^0_1(x, x')$, and a $2$-morphism 
$v \in H^0_2(y)$, 
such that 
$\Phi(e) = f \circ \opn{D}(v)$. 
Let 
\[ \til{f} := f \circ \opn{D}(v) \in H^0_1(y, y') \]
and
\[ \til{c} := \opn{Ad}(h^{-1})(v_{(1)}^{-1}) \circ c \circ v_{(0)} \in 
H^0_2(y_{(0)}) . \]
A simple calculation shows that 
\[ (\til{f}, \til{c}) :  (y, h, b) \to (y', h', b') \]
is also a gauge transformation. 

Recall that $\Phi(e) = \til{f}$, so  
\[ \Phi(g^{-1} \circ e_{(1)}^{-1} \circ g' \circ e_{(0)}) = 
h^{-1} \circ \til{f}_{(1)}^{-1} \circ h' \circ \til{f}_{(0)} =
\opn{D}(\til{c}) . \]
This is condition (i) of Definition \ref{dfn:100} for the gauge transformation 
$(\til{f}, \til{c})$.
Because the group homomorphism 
\[  \bsym{\pi}_1(\Phi^1, x_{(0)}) : 
\bsym{\pi}_1(G^1, x_{(0)}) \to
\bsym{\pi}_1 ( H^1, y_{(0)}) \]
is injective, it follows that there is an element 
$d' \in G^1_2(x_{(0)})$ such that 
\[ g^{-1} \circ e_{(1)}^{-1} \circ g' \circ e_{(0)} = \opn{D}(d') . \]
Consider the element 
$w := \til{c} \circ \Phi(d')^{-1} \in H^1_2(y_{(0)})$. 
It satisfies $\opn{D}(w) = 1$, so it belongs to the subgroup 
$\bsym{\pi}_2(H^1, y_{(0)})$. But the homomorphism 
\[ \bsym{\pi}_2(\Phi^1, x_{(0)}) : 
\bsym{\pi}_2(G^1, x_{(0)}) \to
\bsym{\pi}_2 ( H^1, y_{(0)}) \]
is bijective, so there is a unique element 
$v \in G^1_2(x_{(0)})$ satisfying $\opn{D}(v) = 1$ and 
$\Phi(v) = w$.
Let $d := v \circ d' \in G^1_2(x_{(0)})$.
Then $\Phi(d) = \til{c}$ and
\[ \opn{D}(d) = \opn{D}(d') = g^{-1} \circ e_{(1)}^{-1} \circ g' \circ e_{(0)}
. \]
Thus the pair $(e, d)$ is a partial gauge transformation 
$(x, g) \to (x', g')$. 

The last thing to check is that condition (ii) of Definition \ref{dfn:100}
holds for $(e, d)$. Let 
\[ u := \opn{Ad}(e_{(0)}^{-1})(a')^{-1} \circ    
d_{(0, 2)}^{-1} \circ a \circ 
\opn{Ad}(g_{(0, 1)}^{-1})(d_{(1, 2)}) \circ d_{(0, 1)}
\in  G^2_2(x_{(0)}) . \]
A direct calculation shows that 
$\opn{D}(u) = 1_{x_{(0)}} \in G^2_1(x_{(0)})$, so 
$u \in \bsym{\pi}_2(G^2, x_{(0)})$. 
We know that $\Phi(u) = 1_{y_{(0)}}$, and that the homomorphism 
\[ \bsym{\pi}_2(\Phi^2 ,x_{(0)}) : \bsym{\pi}_2(G^2 ,x_{(0)}) \to
\bsym{\pi}_2(H^2 ,y_{(0)}) \] 
is injective. It follows that $u = 1_{x_{(0)}}$, which is
what we had to check.
\end{proof}

\section{Some Remarks}
\label{sec:remarks}

To finish the note, here are a few remarks and clarifications, to help place
our work in context. 

\begin{rem} \label{rem:10}
Cosimplicial crossed groupoids arise naturally in the geometry of gerbes. 
This is well-known -- see \cite{Gi, Br1, Br2, BGNT1}. 
Let us quickly indicate how this occurs. Let $\mcal{G}$ be a sheaf of groups on
a topological space $X$, and let $\mcal{A}ut(\mcal{G})$ be its sheaf of
automorphism groups. There is an obvious sheaf of crossed
groups (i.e.\ crossed modules)
\[ \gerbe{G} = \big( \opn{D} : \mcal{G} \to \mcal{A}ut(\mcal{G}) \big)  \] 
on $X$. Consider an open covering $\bsym{U} = \{ U_k \}$ of $X$. 
The \v{C}ech construction gives rise to a cosimplicial crossed group
$G := \mrm{C}(\bsym{U} , \gerbe{G})$.
The set $\ol{\opn{Desc}}(G)$ is an approximation of {\em the set of
equivalence classes of $\mcal{G}$-gerbes} on $X$ (the discrepancy is because we
may have to use refinements and hypercoverings). 

If  every $\mcal{G}$-gerbe totally trivializes on the covering $\bsym{U}$ (see
\cite[Definition 9.16]{Ye1}),
then $\ol{\opn{Desc}}(G)$ actually classifies $\mcal{G}$-gerbes.
If $\bsym{V}$ is another such covering, and 
$\bsym{V} \to \bsym{U}$ is a refinement, then there is an induced 
weak equivalence of cosimplicial crossed groups
$\mrm{C}(\bsym{U} , \gerbe{G}) \to \mrm{C}(\bsym{V} , \gerbe{G})$.

More general gerbes (not $\mcal{G}$-gerbes as above) 
can sometimes be classified by a {\em sheaf of crossed groupoids} 
$\gerbe{G}$ -- see next remark.
For the full generality is it necessary to invoke more complicated
combinatorics -- see Remark \ref{rem:15}.
\end{rem}

\begin{rem} \label{rem:12}
Theorem \ref{thm:1} is used to prove {\em twisted deformation quantization} in
our paper \cite{Ye1} (see also the survey article \cite{Ye3}). We look at a
smooth algebraic variety $X$ over a field $\K$ of characteristic $0$.
By parameter $\K$-algebra we mean a complete noetherian local commutative 
$\K$-algebra $R$, with maximal ideal $\m$ and residue field 
$R / \m = \K$. The main example is $R = \K[[\hbar]]$, the algebra of
power series in a variable $\hbar$. 

On the variety $X$ there are two important sheaves of DG Lie algebras: the
algebra $\mcal{T}_{\mrm{poly}, X}$ of poly derivations, and the algebra 
$\mcal{D}_{\mrm{poly}, X}$ of poly differential operators. 
Consider the sheaves of pronilpotent DG Lie algebras
$\m \hot \mcal{T}_{\mrm{poly}, X}$ and 
$\m \hot \mcal{D}_{\mrm{poly}, X}$. The {\em Deligne crossed groupoid}
construction (see \cite{Ge} or \cite{Ye2}) gives rise to 
sheaves of crossed groupoids 
$\gerbe{G} := \opn{Del} \bigl( \m \hot \mcal{T}_{\mrm{poly}, X} \bigr)$
and
$\gerbe{H}:= \opn{Del} \bigl( \m \hot \mcal{D}_{\mrm{poly}, X} \bigr)$.

Now let $\bsym{U}$ be a finite affine open covering of $X$. 
Consider the cosimplicial crossed groupoids 
$G := \mrm{C}(\bsym{U} , \gerbe{G})$
and 
$H := \mrm{C}(\bsym{U} , \gerbe{H})$.
The sets $\ol{\opn{Desc}}(G)$ and $\ol{\opn{Desc}}(H)$
classify {\em twisted Poisson $R$-deformations} and {\em
twisted associative $R$-deformations} of $\mcal{O}_X$, respectively.
These twisted deformations are stacky versions of usual deformations 
(similar to the stacks of algebroids in \cite{Ko, KS}). The fact that twisted
$R$-deformations of $\mcal{O}_X$ totally trivialize on any affine open covering 
relies on our paper \cite{Ye4}.

There is a diagram of weak equivalences of cosimplicial crossed groupoids
\[ \UseTips \xymatrix @C=8ex @R=6ex {
G
\ar[d]_{\Phi_{\til{G}}}
&
H
\ar[d]^{\Phi_{\til{H}}}
\\
\til{G}
\ar[r]^{\til{\Psi}}
&
\til{H}
} \]
The weak equivalences $\Phi_{\til{G}} : G \to \til{G}$ and
$\Phi_{\til{H}} : H \to \til{H}$ come from suitable 
resolutions of the sheaves of DG Lie algebras 
$\mcal{T}_{\mrm{poly}, X}$ and $\mcal{D}_{\mrm{poly}, X}$ respectively. The 
weak equivalence $\til{\Psi} : \til{G} \to \til{H}$ comes from the Kontsevich 
Formality Theorem and its adaptation to the algebro-geometric setting; see 
\cite[Theorems 10.3 and 10.6]{Ye1}. Due to Theorem \ref{thm:1} we obtain a 
bijection  
\[ \opn{tw{.}quant} : \ol{\opn{Desc}}(G) \to \ol{\opn{Desc}}(H) , \]
\[ \opn{tw{.}quant} := \ol{\opn{Desc}}(\Phi_{\til{H}})^{-1} \circ 
\ol{\opn{Desc}}(\til{\Psi}) \circ \ol{\opn{Desc}}(\Phi_{\til{G}}) . \]
A refinement argument shows that $\opn{tw{.}quant}$ is independent of the 
resolutions. We call it the {\em twisted quantization map}. 
 
The twisted quantization map $\opn{tw{.}quant}$ does not arise from a
refinement or any similar geometric operation. Indeed, it is conjectured that in
some cases (e.g.\ when $X$ is a Calabi-Yau surface) the twisted quantization 
would send a sheaf to a stack, thus destroying the geometry. 

An earlier approach to twisted deformation quantization (in previous versions
of \cite{Ye1}) relied on {\em nonabelian multiplicative integration on
surfaces} \cite{Ye6}. The new approach uses Theorem \ref{thm:1}, and is
much shorter. Let us mention however that the ideas of \cite{Ye6} were recently
picked up by \cite{BGNT2}, for comparing various nerves of a $2$-groupoid. 
\end{rem}

\begin{rem} \label{rem:11}
A crossed groupoid (i.e.\ crossed module over a groupoid) is the same as a {\em
strict $2$-groupoid}; see \cite[Proposition 5.5]{Ye2}. The homotopy set
$\bsym{\pi}_0(G)$ and 
groups $\bsym{\pi}_i(G, x)$ are the same in both incarnations.
A crossed groupoid  is also the same as a 
category with inner gauge groups $(\cat{P}, \opn{IG}, \opn{ig})$ where
$\cat{P}$ is a groupoid -- see \cite[Section 5 and Proposition 10.4]{Ye1}. 

Traditionally papers used the $2$-groupoid language to discuss descent for
gerbes (cf.\ \cite{BGNT1}). In \cite{Ye1} we realized that the crossed groupoid
language is more effective: geometric descent data comes naturally in terms of a
cosimplicial crossed groupoid (see \cite[Section 10]{Ye1}), and also the Deligne
construction (see \cite[Section 6]{Ye2}) appears as a crossed groupoid, so it is
more natural to talk about the Deligne crossed groupoid. 

Another reason for preferring the crossed groupoid language is that defining
combinatorial descent data, and proving Theorem \ref{thm:100}, is easier this
way.
\end{rem}

\begin{rem}
To the best of our knowledge there is no prior proof of Theorem \ref{thm:1},
and moreover such a general result was not even conjectured before.

Results of similar flavor do appear in the work of Breen \cite{Br1, Br2} on 
classification of gerbes. Indeed Breen shows that the morphism of crossed
groups arising from a refinement (cf.\ Remark \ref{rem:10}) 
induces a bijection on $\ol{\opn{Desc}}(-)$. 
It is possible that Theorem \ref{thm:1} can be given a shorter proof using
Breen's diagram calculus. 

Another earlier result, a special case of our Theorem \ref{thm:1}, is
\cite[Proposition 3.3.1]{BGNT1}. However it is too restricted for the purposes
of \cite{Ye1}. 

Assertions in \cite{CH} that are similar to Theorem
\ref{thm:1} are, to the best of our understanding, not actually proved there. 

The paper \cite{Pr} by Prezma, which is a follow-up to our own present note, 
extends Theorem \ref{thm:1} significantly (see next remark). The proof given
in \cite{Pr} uses methods from homotopy
theory, and is much more sophisticated than our direct calculations. 
Nonetheless our proof is somewhat shorter, and is completely self-contained. 
\end{rem}

\begin{rem} \label{rem:cosim.1}
Let $G \in \bsym{\Delta} (\cat{CrGrpd})$.
The set $\opn{Desc}(G)$ has a crossed groupoid structure, in
which the gauge transformations are the $1$-morphisms; so 
$\bsym{\pi}_0(  \opn{Desc}(G) ) = \ol{\opn{Desc}}(G)$.
See \cite{Pr}, and also \cite[Section 5]{Br2}, \cite{Hi} and \cite{BGNT1} for
special cases. A morphism $\Phi : G \to H$ in 
$\bsym{\Delta} (\cat{CrGrpd})$ induces a morphism 
\[ \opn{Desc}(\Phi) : \opn{Desc}(G) \to 
\opn{Desc}(H) \]
in $\cat{CrGrpd}$. 

Prezma \cite{Pr} proves that if $\Phi$ is a cosimplicial weak equivalence,  then 
the morphism of crossed groupoids $\opn{Desc}(\Phi)$ is a {\em weak
equivalence}. This of course implies Theorem \ref{thm:1}.
The result of Prezma extends Jardine's
corresponding result for (ordinary) groupoids \cite{Ja}.
Moreover Prezma shows how to extend his equivalence theorem to higher 
($n > 2$) groupoids.
\end{rem}

\begin{rem} \label{rem:15}
We should emphasize that Theorem \ref{thm:1} holds in much greater
generality than stated. First observe that we made no use of the full
cosimplicial structure of the cosimplicial crossed groupoids $G$ and 
$H$ -- all we needed is that these are {\em $3$-truncated
semi-cosimplicial crossed groupoids}. Namely that $G$ and 
$H$ are functors 
$\bsym{\Delta}^{\leq 3}_{\mrm{inj}} \to \cat{CrGrpd}$, where
$\bsym{\Delta}^{\leq 3}_{\mrm{inj}}$ is the subcategory of 
$\bsym{\Delta}$ on the object set $\{ 0, 1, 2, 3 \}$, and with only injective 
functions $\al : p \to q$. Likewise for the weak equivalence 
$\Phi : G \to H$.

The reason  we chose to use the cosimplicial language is convenience. It
also fits nicely with the conventions of \cite{Ye1}. 

Other papers dealing with descent for gerbes and stacks (cf.\ \cite{Br1, Br2,
DP}) indicate that Theorem \ref{thm:1} might be extended further. It is almost
clear to us that the theorem holds for {\em $4$-truncated
pseudo-semi-cosimplicial crossed groupoids}, namely for pseudo-functors
$G, H : \bsym{\Delta}^{\leq 4}_{\mrm{inj}} \to \cat{CrGrpd}$.
This would correspond to a \v{C}ech construction performed on a {\em stack
of crossed groupoids} on a space $X$. 

A more complicated extension would be to replace the $2$-category 
$\cat{CrGrpd}$ of crossed groupoids with the $2$-category
$\cat{w2Grpd}$  of {\em weak $2$-groupoids}, i.e.\ to work with pseudo-functors
$\bsym{\Delta}^{\leq 4}_{\mrm{inj}} \to \cat{w2Grpd}$.

It is reasonable to assume that proving such extended results would require
either a bare-hands approach like ours, or else a very sophisticated
homotopical machinery.
\end{rem}


\end{document}